\newtheorem{theorem}{Theorem}[section]
\newtheorem{lemma}[theorem]{Lemma}
\newtheorem{proposition}[theorem]{Proposition}
\title[Counting paths in perfect trees]{Counting paths in perfect trees}
\author{Peter J. Humphries}
\address{Department of Mathematics and Physics, North Carolina Central University, Durham, N.C., U.S.A., and Department of Mathematics and Statistics, University of Canterbury, Christchurch, New Zealand}
\email{pjhumphries@gmail.com}
\subjclass{}
\keywords{}
\date{\today}
\begin{document}

\begin{abstract}
We present some exact expressions for the number of paths of a given length in a perfect $m$-ary tree. We first count the paths in perfect rooted $m$-ary trees and then use the results to determine the number of paths in perfect unrooted $m$-ary trees, extending a known result for binary trees.
\end{abstract}

\maketitle

\section{Introduction}\label{intro}

A tree $T=(V,E)$ is a connected acyclic graph with a finite vertex set $V$ and finite edge set $E\subseteq\binom{V}{2}$. The distance $d_T(u,v)$ between two vertices $u,v\in V$ is the number of edges in the (unique) path in $T$ that joins $u$ and $v$. In this paper, we focus on counting the pairs of vertices that are some given distance apart, or equivalently the paths of a given length, in a perfect tree.

Given a tree $T$, let $P(T,t)$ denote the number of pairs of vertices at distance exactly $t\ge 1$ from each other. That is,
\[
P(T,t)=\left|\left\{\{u,v\}\in\binom{V}{2}:d_T(u,v)=t\right\}\right|\quad{\rm and}\quad \sum_{t\ge 1}P(T,t)=\frac{|V|\left(|V|-1\right)}{2}.
\]
Note immediately that $P(T,1)=|E|$. Furthermore, from the observations that each vertex $v$ of degree $\deg(v)$ is the central vertex of $\binom{\deg(v)}{2}$ distinct paths of length $2$ and that each edge $\{u,v\}$ is the central edge of $(\deg(u)-1)(\deg(v)-1)$ distinct paths of length $3$, we obtain
\[
P(T,2)=\sum_{v\in V}\binom{\deg(v)}{2}\quad{\rm and}\quad P(T,3)=\sum_{\{u,v\}\in E}\left(\deg(u)-1\right)\left(\deg(v)-1\right).
\]
Similar expressions for $P(T,t)$ when $t\ge 4$ become increasingly complicated.

Faudree et al.~\cite{fau73} constructed examples showing that two non-isomorphic trees $T_1,T_2$ can have identical path length distributions (that is, $P(T_1,t)=P(T_2,t)$ for all $t$). Tight upper and lower bounds for $P(T,t)$ were given by Dankelmann~\cite{dan11} in terms of $|V|$ and either the radius or diameter of $T$.

A binary tree $T$, in which every vertex has degree $1$ or degree $3$, is perfect (or balanced) if $T$ has the maximum number of vertices among all binary trees of the same diameter. De Jong et al.~\cite{dej15} used a recursive approach to show that the perfect binary tree $T$ of diameter $D$ with $n$ degree-$1$ vertices has
\begin{align*}
P(T,t)=
\begin{cases}
2^{\frac{t+1}{2}}\left(n-3\cdot 2^{\frac{t-3}{2}}\right), &t\ {\rm odd},\\
3\cdot 2^{\frac{t}{2}-1}\left(n-2^{\frac{t}{2}}\right),&t\ {\rm even}
\end{cases}
\end{align*}
paths of length $t$ for $3\le t\le D$.

We adopt a different approach to extend this to perfect $m$-ary trees, where each vertex has degree $1$ or $m+1$. In particular, we prove the following theorem.

\begin{theorem}\label{unrootedmary}
Let $T$ be the perfect unrooted $m$-ary tree of diameter $D$. Then, for $1\le t\le D$,
\begin{align*}
P(T,t)&=
\begin{cases}
m^{\frac{t-1}{2}}\left(V(D)-V(t-1)\right),&t\ {\rm odd},\\
\frac{1}{2}(m+1)m^{\frac{t}{2}-1}\left(V(D)-V(t-1)\right),&t\ {\rm even},
\end{cases}
\end{align*}
where $V(d)$ is the number of vertices in the perfect unrooted $m$-ary tree of diameter $d$.
\end{theorem}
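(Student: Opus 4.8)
The plan is to count each path by the vertex on it lying closest to a distinguished center of $T$, having first solved the analogous problem for rooted trees. Write $R_\ell$ for the perfect rooted $m$-ary tree of height $\ell$ (root of degree $m$, every other internal vertex of degree $m+1$, all leaves at depth $\ell$), which has $m^j$ vertices at each depth $j\le\ell$. When $D=2h$ is even, $T$ is obtained by attaching the roots of $m+1$ copies of $R_{h-1}$ to a new center vertex $c$; when $D=2h+1$ is odd, $T$ is obtained by attaching $m$ copies of $R_{h-1}$ to each endpoint of a new central edge $\{c_1,c_2\}$. From these descriptions one reads off $V(d)$, and a short computation yields the identity $V(d)-V(d-1)=m^{\lfloor d/2\rfloor}$, so that $V(D)-V(t-1)=\sum_{d=t}^{D}m^{\lfloor d/2\rfloor}$; this telescoped form is the shape the final answer must take.

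First I would count the length-$t$ paths in $R_\ell$. The elementary input is that a vertex at depth $j$ is the top end of exactly $m^i$ downward paths of length $i$ when $j+i\le\ell$, and of none otherwise. Every path has a unique highest vertex $w$, its \emph{apex}: either $w$ is an endpoint of the path, which is then one of those downward paths, or $w$ is interior, in which case the path splits at $w$ into two downward ``arms'' of lengths $a,b\ge1$ with $a+b=t$ entering two distinct children of $w$. Summing the arm contributions over the $\binom{m}{2}$ unordered pairs of children, over the admissible splits $(a,b)$ — with the balanced split $a=b=t/2$, which can occur only for even $t$, counted with the two arms regarded as interchangeable — and over the depth of $w$, and then collapsing the resulting geometric sums, gives $P(R_\ell,t)$ and, more usefully, the refined count of length-$t$ paths whose apex sits at a prescribed depth.

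Next I would transfer the count to $T$. For even $D$, every path either avoids $c$, hence lies inside one of the $m+1$ copies of $R_{h-1}$ and is already counted, or has $c$ as an endpoint (equivalently, $c$ followed by a downward path of length $t-1$ out of a copy-root), or has $c$ as an interior vertex — again the ``two arms into distinct children'' count, but now with the $m+1$ children of $c$ in place of the usual $m$. The downward-path tallies from the previous step supply every term. For odd $D$ the corresponding trichotomy reads: paths lying inside one of the $2m$ copies of $R_{h-1}$; paths meeting exactly one of $c_1,c_2$; and paths whose interior contains the whole edge $\{c_1,c_2\}$, so the path is a downward path of length $a$ from $c_1$, followed by the edge $\{c_1,c_2\}$, followed by a downward path of length $t-1-a$ from $c_2$. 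Assembling these pieces and summing the geometric progressions produces a closed form for $P(T,t)$.

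The main obstacle is the bookkeeping in these last two steps. The restriction ``an arm of length $\ell$ issued from a depth-$j$ vertex exists only when $j+\ell\le h$'' truncates every geometric series involved, and one must check that, after summing over apex depths and over all arm-splits, the surviving terms reassemble \emph{exactly} into $m^{(t-1)/2}\sum_{d=t}^{D}m^{\lfloor d/2\rfloor}$ when $t$ is odd and into $\frac{1}{2}(m+1)m^{t/2-1}\sum_{d=t}^{D}m^{\lfloor d/2\rfloor}$ when $t$ is even. The parity of $t$ enters only through the balanced-arm term; the way that term combines with the $\binom{m+1}{2}$ coming from the center vertex is precisely what produces the coefficient $\frac{1}{2}(m+1)$ in the even case. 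Arranging the computation so that the cases $D$ even and $D$ odd run in parallel, rather than duplicating the algebra, is the delicate point, after which the theorem follows by routine summation.
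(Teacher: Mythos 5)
Your proposal follows essentially the same route as the paper: counting paths in the rooted tree by their unique highest vertex (your ``apex'') is exactly the paper's classification into type-$[s,t-s]$ paths rooted at a vertex, and the telescoping identity $V(D)-V(t-1)=\sum_{d=t}^{D}m^{\lfloor d/2\rfloor}$ correctly identifies the target form. The only divergence is the even-diameter case, where you split $T$ at its centre vertex into $m+1$ copies of a depth-$(r-1)$ rooted tree, whereas the paper splits $T$ along an edge into a depth-$r$ and a depth-$(r-1)$ rooted tree; your symmetric version works equally well (the $\binom{m+1}{2}$ pairing at the centre replaces the paper's count of paths through the joining edge), while the paper's choice lets both parities of $D$ share the same three-way classification ``in $T_1$, in $T_2$, or through $e$''. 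The remaining work in either case is the routine truncated-geometric-series bookkeeping you describe.
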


We first derive an analogous theorem for perfect rooted $m$-ary trees, where the root has degree $m$ and all other vertices have degree $1$ or $m+1$. Theorem~\ref{rootedmary} is obtained in Section~\ref{rooted} by counting the paths of length $t$ in a perfect rooted $m$-ary tree according to minimum depth, considering odd $t$ and even $t$ separately.

\begin{theorem}\label{rootedmary}
Let $T$ be the perfect rooted $m$-ary tree of depth $r$, and let $t$ satisfy $1\le t\le 2r$. If $t$ is odd, then
\begin{align*}
P(T,t)&=
\begin{cases}
m^{\frac{t-1}{2}}\left(V_R(r)-V_R(\frac{t-1}{2})\right)-\frac{t-1}{2}m^{t-1}, &t\le r,\\
m^{\frac{t-1}{2}}\left(V_R(r)-V_R(\frac{t-1}{2})\right)-\left(r-\frac{t-1}{2}\right)m^{t-1}, &t>r,
\end{cases}
\end{align*}
and if $t$ is even, then
\begin{align*}
P(T,t)&=
\begin{cases}
\frac{1}{2}(m+1)m^{\frac{t}{2}-1}\left(V_R(r)-V_R(\frac{t}{2}-1)\right)-\frac{t}{2}m^{t-1}, &t\le r,\\
\frac{1}{2}(m+1)m^{\frac{t}{2}-1}\left(V_R(r)-V_R(\frac{t}{2}-1)\right)-\left(r-\frac{t}{2}+1\right)m^{t-1}, &t>r,
\end{cases}
\end{align*}
where $V_R(d)$ is the number of vertices in the perfect rooted $m$-ary tree of depth $d$.
\end{theorem}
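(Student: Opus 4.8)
The plan is to count the unordered pairs $\{u,v\}$ with $d_T(u,v)=t$ by grouping them according to the depth of the vertex on the path joining $u$ and $v$ that is nearest to the root. That vertex is $w=\mathrm{lca}(u,v)$; if it has depth $d$, write $a=d_T(w,u)$ and $b=d_T(w,v)$, so that $a+b=t$ with $a,b\ge 0$. Throughout I use that the perfect rooted $m$-ary tree of depth $r$ has $m^{i}$ vertices at depth $i$ for $0\le i\le r$, that every vertex of depth less than $r$ has exactly $m$ children, and that $V_R(d)=1+m+\dots+m^{d}=\frac{m^{d+1}-1}{m-1}$, with the convention $V_R(d)=0$ for $d<0$.

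First I would handle the pairs with $w\in\{u,v\}$, i.e.\ those for which one endpoint is an ancestor of the other, so that one of $a,b$ is $0$. Such a pair can have its lca at depth $d$ only if $d+t\le r$, and then each of the $m^{d}$ vertices of depth $d$ has exactly $m^{t}$ descendants at depth $d+t$; summing over $d$ gives $\sum_{d=0}^{r-t}m^{d+t}=m^{t}V_R(r-t)$ such pairs, which vanishes precisely when $t>r$.

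Next, the pairs with $w\notin\{u,v\}$: now $a,b\ge 1$ and $u,v$ lie in the subtrees rooted at two distinct children of $w$. For a fixed $w$ of depth $d$ and a fixed ordered pair $(a,b)$ with $a+b=t$ and $a,b\ge 1$, there are $m^{a}$ choices for $u$ and, in a different child-subtree, $(m-1)m^{b-1}$ choices for $v$, valid only when $\max(a,b)\le r-d$. Each unordered pair $\{u,v\}$ of this kind is produced by exactly two such ordered configurations, so the number of pairs with apex $w$ equals $\tfrac12(m-1)m^{t-1}$ times the number of admissible $(a,b)$. Multiplying by the $m^{d}$ choices of $w$ and summing the geometric series over $d$, the total contribution is
\[
\tfrac12(m-1)m^{t-1}\sum_{\substack{a+b=t\\ a,b\ge 1}}V_R\!\bigl(r-\max(a,b)\bigr),
\]
the convention $V_R(d)=0$ for $d<0$ automatically discarding any configuration with $\max(a,b)>r$. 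Pairing $(a,b)$ with $(b,a)$ and, when $t$ is even, isolating the single middle term $(t/2,t/2)$, collapses this to $(m-1)m^{t-1}$ times a sum of $V_R(r-i)$ over a range of $i$ whose endpoints depend on the parity of $t$ and on whether $t\le r$ --- the restriction $i\le r$ being what truncates the range once $t>r$.

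Finally I would add the two contributions, substitute $(m-1)V_R(d)=m^{d+1}-1$, and evaluate the resulting geometric sums. After re-indexing, the powers of $m$ reassemble into $m^{(t-1)/2}\bigl(V_R(r)-V_R(\tfrac{t-1}{2})\bigr)$ when $t$ is odd and into $\tfrac12(m+1)m^{t/2-1}\bigl(V_R(r)-V_R(\tfrac t2-1)\bigr)$ when $t$ is even --- the same expression in both sub-cases, merely assembled from different pieces --- while the trailing $-1$'s, together with the factor $\tfrac12$ carried by the even-$t$ middle term, produce exactly the linear correction $\tfrac{t-1}{2}m^{t-1}$, $\tfrac t2 m^{t-1}$, $\bigl(r-\tfrac{t-1}{2}\bigr)m^{t-1}$ or $\bigl(r-\tfrac t2+1\bigr)m^{t-1}$, depending on the parity of $t$ and on whether $t\le r$. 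I expect this last step to be the only real obstacle: it is routine but fiddly, and one must keep the summation ranges and the counts of admissible indices exactly right at the boundary values $t=r$, $t=r+1$ and $t=2r$, and take proper care of the factor $\tfrac12$ appearing in the even-$t$ count. As sanity checks I would verify the outcome for $t=1$ against $P(T,1)=|E|=V_R(r)-1$ and for $t=2$ against $P(T,2)=\sum_{v}\binom{\deg(v)}{2}$.
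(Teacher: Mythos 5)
Your proposal is correct and follows essentially the same route as the paper: classifying paths by their apex (the paper's ``type-$[s,t-s]$ paths rooted at $v_s$'' is exactly your lca decomposition), obtaining the per-apex counts $m^t$, $(m-1)m^{t-1}$, and $\tfrac12(m-1)m^{t-1}$, and then summing geometric series over the apex depth with the case split $t\le r$ versus $t>r$ arising from truncation of the summation range. The final algebraic reassembly that you defer as ``routine but fiddly'' is precisely what the paper's Propositions~\ref{oddpathsmary} and~\ref{evenpathsmary} carry out in full, and it does close as you predict.
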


In Section~\ref{unrooted}, we use the results from Section~\ref{rooted} to prove Theorem~\ref{unrootedmary}.

\section{Perfect rooted $m$-ary trees}\label{rooted}

In a rooted $m$-ary tree $T=(V,E)$, there is a distinguished vertex $\rho$ of degree $m$ called the root, while every other vertex has degree $1$ or $m+1$. The depth $r$ of $T$ is the maximum value of $d_T(\rho,v)$ over all vertices $v\in V$. We call $T$ perfect if and only if every degree-$1$ vertex is distance $r$ from the root $\rho$.

Let $T$ be the perfect rooted $m$-ary tree of depth $r$. For $0\le s\le r$, there are exactly $m^s$ vertices $v\in V$ for which $d_T(\rho,v)=s$. Let $p=v_0\cdots v_{t}$ be a path of length $t$ in $T$. Then there is a unique vertex $v_s$, $0\le s\le t/2$, such that $d_T(\rho,v_s)\le d_T(\rho,v_i)$ for all $0\le i\le t$. We call $p$ a type-$[s,t-s]$ path rooted at $v_s$.

\begin{lemma}\label{pathcountbytypemary}
Let $T$ be the perfect rooted $m$-ary tree of depth $r$. If $r<t-s$, then the number of type-$[s,t-s]$ paths in $T$ rooted at $\rho$ is $0$. If $r\ge t-s$, then the number of type-$[s,t-s]$ paths in $T$ rooted at $\rho$ is
\begin{align*}
\begin{cases}
m^t,&s=0,\\
(m-1)m^{t-1},&0<s<\frac{t}{2},\\
\frac{1}{2}(m-1)m^{t-1},&s=\frac{t}{2}.
\end{cases}
\end{align*}
\end{lemma}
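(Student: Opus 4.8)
The plan is to exploit the structure of a rooted path directly. Write a type-$[s,t-s]$ path rooted at $\rho$ as $p=v_0\cdots v_t$ with $v_s=\rho$, and split it into two \emph{legs} emanating from $\rho$: the ascending leg $\rho=v_s,v_{s-1},\dots,v_0$ of length $s$, and the descending leg $\rho=v_s,v_{s+1},\dots,v_t$ of length $t-s$. Since consecutive vertices of $p$ differ in depth by exactly $1$ and the depth is minimised at $v_s=\rho$, each leg moves strictly away from $\rho$, so its endpoints lie at depths $s$ and $t-s$ respectively. The root has $m$ children and every non-root vertex at depth less than $r$ has exactly $m$ children (degree $m+1$), so a leg of length $\ell$ issuing from $\rho$ is precisely a sequence of $\ell$ successive child-steps and exists iff $\ell\le r$. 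As $s\le t/2$, the longer leg has length $t-s$, which gives the feasibility condition $t-s\le r$ and hence the count $0$ when $r<t-s$.

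Now assume $r\ge t-s$ and count by cases. If $s=0$ the path is a single descending leg of length $t\le r$ from $\rho$: there are $m$ choices at each of the $t$ steps, so $m^t$ paths, with no overcounting since the endpoints $\rho$ and $v_t$ are distinguishable. If $0<s\le t/2$ both legs are nonempty; I would choose the ascending leg first ($m$ choices for each of its $s$ steps, so $m^s$ legs), and then the descending leg, whose first step must be a child of $\rho$ \emph{different} from the one used by the ascending leg — else $p$ would revisit that child — giving $m-1$ choices for the first step and $m$ for each of the remaining $t-s-1$ steps, i.e.\ $(m-1)m^{t-s-1}$ legs. Thus each ordered pair (ascending leg, descending leg) contributes $(m-1)m^{s}m^{t-s-1}=(m-1)m^{t-1}$ configurations. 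Finally, since $v_0\cdots v_t$ and $v_t\cdots v_0$ describe the same path, I correct for symmetry: when $s<t/2$ the endpoints lie at distinct depths $s$ and $t-s$, so each path arises exactly once and the count is $(m-1)m^{t-1}$; when $s=t/2$ the two equal-length legs may be swapped, so each path arises exactly twice and the count is $\tfrac12(m-1)m^{t-1}$. Assembling the cases $s=0$, $0<s<t/2$, $s=t/2$ yields the stated formula.

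The computation itself is routine; the points requiring care are all in the bookkeeping of the edge cases. First, the $s=0$ case has no branch point and so contributes $m^t$ rather than $(m-1)m^{t-1}$. Second, when both legs are present the branch at $\rho$ forces the factor $m-1$ exactly once (the two legs must start into different subtrees of $\rho$), and one should note that no further vertex coincidences can occur, since after leaving $\rho$ the legs live in disjoint subtrees and each strictly increases in depth. Third, the symmetric type $s=t/2$ needs the compensating factor $\tfrac12$, which the generic argument for $s<t/2$ does not supply. These are the only subtleties I anticipate.
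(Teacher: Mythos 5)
Your proof is correct and takes essentially the same approach as the paper's: decompose the path at $\rho$ into two legs descending into distinct subtrees, count $m^{s}\cdot(m-1)m^{t-s-1}$ ordered choices, and halve the count when $s=\tfrac{t}{2}$ because the two equal-length legs can be swapped. You are merely more explicit than the paper about the feasibility condition $t-s\le r$ and about why no overcounting occurs when $s<\tfrac{t}{2}$.
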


\begin{proof}
The case $r<t-s$ is obvious, as is the case $r\ge t-s$ with $s=0$. Assume that $r\ge t-s$ and that $0<s<\frac{t}{2}$. Then any type-$[s,t-s]$ path can be decomposed into a type-$[0,s]$ path rooted at $\rho$ and a type-$[0,t-s]$ path rooted at $\rho$, where these two paths are disjoint. There are $m^{s}$ choices for the type-$[0,s]$ path. Once this choice has been made, there are $(m-1)m^{t-s-1}$ choices for the type-$[0,t-s]$ path, so the total number of type-$[s,t-s]$ paths rooted at $\rho$ is $2\binom{m}{2}m^{t-2}=(m-1)m^{t-1}$. If $s=\frac{t}{2}$, then this argument counts each type-$[s,s]$ path twice, hence the third equality.
\end{proof}

Let $P_m(r,t)$ denote the number of paths of length $t$ in the perfect rooted $m$-ary tree of depth $r$. The preceding lemma can be used to derive exact expressions for $P_m(r,t)$. We consider paths of odd length and paths of even length separately, and make repeated use of the identity
\begin{align*}
\sum_{i=a}^{b}m^i=\frac{m^{b+1}-m^a}{m-1}.
\end{align*}

\begin{proposition}\label{oddpathsmary}
The number of paths of length $t=2k-1$ in the perfect rooted $m$-ary tree of depth $r$, where $1\le k\le r$, is
\begin{align*}
P_m(r,t)&=
\begin{cases}
m^{2k-2}\left(\frac{m^{r-k+2}-1}{m-1}-(r-k+2)\right),&r<2k-1,\\
m^{2k-2}\left(\frac{m^{r-k+2}-1}{m-1}-k\right),&r\ge 2k-1.
\end{cases}
\end{align*}
\end{proposition}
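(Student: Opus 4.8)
The plan is to count the paths of length $t=2k-1$ by grouping them according to the vertex at which they are rooted and their type, applying Lemma~\ref{pathcountbytypemary}, and then summing with the aid of the geometric identity displayed above.

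Fix such a path $p=v_0\cdots v_t$. It is rooted at the unique vertex $v_s$ of $p$ closest to $\rho$, and we may take $s$ with $0\le s\le t/2$; for odd $t$ this forces $0\le s\le k-1$, the value $s=t/2$ never occurring. Suppose $v_s$ lies at depth $d$. Since $v_s$ is the unique vertex of $p$ closest to $\rho$, the path $p$ lies entirely within the subtree of $T$ rooted at $v_s$, and this subtree is itself a perfect rooted $m$-ary tree, of depth $r-d$. Applying Lemma~\ref{pathcountbytypemary} to that subtree shows that the number of type-$[s,t-s]$ paths rooted at a prescribed vertex of depth $d$ is $0$ when $r-d<t-s$, and otherwise equals $m^t$ if $s=0$ and $(m-1)m^{t-1}$ if $1\le s\le k-1$. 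As there are exactly $m^d$ vertices at depth $d$ for each $0\le d\le r$, summing over $d$ and over $s$ gives
\begin{align*}
P_m(r,2k-1)=\sum_{s=0}^{k-1}c_s\sum_{d=0}^{r-t+s}m^d,
\end{align*}
where $c_0=m^{2k-1}$ and $c_s=(m-1)m^{2k-2}$ for $1\le s\le k-1$, and where the inner sum is taken to vanish exactly when $r-t+s<0$, i.e.\ when $s<2k-1-r$.

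This last point is exactly what produces the two stated cases. If $r\ge 2k-1$, then every index $s\in\{0,1,\dots,k-1\}$ contributes, including the exceptional term $s=0$, whose coefficient $m^{2k-1}$ differs from the common value $(m-1)m^{2k-2}$ of the others; if $r<2k-1$, then only the indices with $2k-1-r\le s\le k-1$ survive, all of which satisfy $s\ge 1$, so the $s=0$ term drops out. In either case I would replace the inner sum by $\frac{m^{r-2k+2+s}-1}{m-1}$ using the identity, pull out a factor of $m^{2k-2}$, evaluate the remaining sum $\sum_s m^{r-2k+2+s}$ over the appropriate range of $s$ by a second application of the identity, and absorb the trailing $-1$'s into a term counting the surviving indices. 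Routine simplification — for instance using $m^{2k-1}\cdot m^{r-2k+2}=m^{r+1}$ to merge the $s=0$ contribution with the rest — then collapses the whole expression to the two closed forms in the statement. The argument is pure bookkeeping: the only things requiring attention are getting the range of $s$ right in each case and tracking the anomalous coefficient of the $s=0$ term, and no conceptual obstacle arises once Lemma~\ref{pathcountbytypemary} is in hand.
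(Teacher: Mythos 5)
Your proposal is correct and follows essentially the same route as the paper: it classifies paths of length $t=2k-1$ by their root vertex and type, applies Lemma~\ref{pathcountbytypemary} to the perfect subtree of depth $r-d$ below each vertex at depth $d$, and sums geometric series over $d$ and $s$, with the two cases arising from whether the $s=0$ term survives. The algebra you defer as ``routine bookkeeping'' is exactly the computation the paper carries out explicitly, and it does close up as you claim.
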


\begin{proof}
Let $T$ be the perfect rooted $m$-ary tree of depth $r$. If $r<k$, then the longest path in $T$ has length $2r<t$, and so $P_m(r,t)=0$.

If $k\le r<2k-1$ and $2k-r-1\le s\le k-1$, then by Lemma~\ref{pathcountbytypemary}, there are $(m-1)m^{t-1}$ type-$[s,t-s]$ paths rooted at each vertex $v$ for which $d_T(\rho,v)\le r-t+s$. Therefore,
\begin{align*}
P_m(r,t)&=(m-1)m^{2k-2}\sum_{s=2k-r-1}^{k-1}\left(\sum_{d=0}^{r-2k+s-1}m^d\right)\\
&=(m-1)m^{2k-2}\sum_{s=2k-r-1}^{k-1}\frac{m^{r-2k+s+2}-1}{m-1}\\
&=m^{2k-2}\sum_{i=1}^{r-k+1}\left(m^i-1\right)\\
&=m^{2k-2}\left(\frac{m^{r-k+2}-1}{m-1}-(r-k+2)\right).
\end{align*}

If $r\ge 2k-1$, then there are $m^{t}$ type-$[0,t]$ paths rooted at each vertex $v$ for which $d_T(\rho,v)\le r-t$. Furthermore, for $1\le s\le k-1$, there are $(m-1)m^{t-1}$ type-$[s,t-s]$ paths rooted at each vertex $v$ for which $d_T(\rho,v)\le r-t+s$. Therefore,
\begin{align*}
P_m(r,t)&=m^{2k-1}\sum_{d=0}^{r-2k+1}m^d+(m-1)m^{2k-2}\sum_{s=1}^{k-1}\left(\sum_{d=0}^{r-2k+s+1}m^d\right)\\
&=m^{2k-2}\left(m\left(\frac{m^{r-2k+2}-1}{m-1}\right)+(m-1)\sum_{s=1}^{k-1}\frac{m^{r-2k+s+2}-1}{m-1}\right)\\
&=m^{2k-2}\left(\frac{m^{r-2k+3}-m}{m-1}+\sum_{i=r-2k+3}^{r-k+1}\left(m^i-1\right)\right)\\
&=m^{2k-2}\left(\frac{m^{r-2k+3}-m}{m-1}+\frac{m^{r-k+2}-m^{r-2k+3}}{m-1}-(k-1)\right)\\
&=m^{2k-2}\left(\frac{m^{r-k+2}-1}{m-1}-k\right).
\end{align*}
\end{proof}

\begin{proposition}\label{evenpathsmary}
The number of paths of length $t=2k$ in the perfect rooted $m$-ary tree of depth $r$, where $1\le k\le r$, is
\begin{align*}
P_m(r,t)&=
\begin{cases}
m^{2k-1}\left(\frac{1}{2}(m+1)\left(\frac{m^{r-k+1}-1}{m-1}\right)-(r-k+1)\right),&r<2k,\\
m^{2k-1}\left(\frac{1}{2}(m+1)\left(\frac{m^{r-k+1}-1}{m-1}\right)-k\right),&r\ge 2k.
\end{cases}
\end{align*}
\end{proposition}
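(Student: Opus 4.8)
The plan is to mirror the proof of Proposition~\ref{oddpathsmary}, classifying each path of length $t=2k$ by the depth of its shallowest vertex and invoking Lemma~\ref{pathcountbytypemary} on the subtree hanging below that vertex. The only genuinely new feature in the even case is that a path can be of type $[k,k]$, and for such a path the third clause of Lemma~\ref{pathcountbytypemary} contributes the extra factor $\frac{1}{2}$.

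Fix the perfect rooted $m$-ary tree $T$ of depth $r$. A path of length $t$ rooted at a vertex $v$ with $d_T(\rho,v)=d$ lies entirely within the perfect rooted $m$-ary tree of depth $r-d$ hanging below $v$, and relative to that subtree it is a type-$[s,t-s]$ path rooted at the root; by Lemma~\ref{pathcountbytypemary} there are no such paths unless $d\le r-t+s$, and otherwise there are $m^t$ of them if $s=0$, there are $(m-1)m^{t-1}$ of them if $0<s<k$, and there are $\frac{1}{2}(m-1)m^{t-1}$ of them if $s=k$. As $T$ has $m^d$ vertices at depth $d$, summing over the admissible depths $0\le d\le r-t+s$ and then over $s$ gives $P_m(r,t)$. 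If $r<2k$ then the contribution from $s=0$ vanishes (it would require a vertex of depth $\le r-2k<0$), the contributions with $0<s<k$ are nonzero only for $s\ge 2k-r$, and the contribution from $s=k$ ranges over $0\le d\le r-k$, so that
\[
P_m(r,t)=(m-1)m^{2k-1}\sum_{s=2k-r}^{k-1}\;\sum_{d=0}^{r-2k+s}m^d\;+\;\frac{1}{2}(m-1)m^{2k-1}\sum_{d=0}^{r-k}m^d.
\]
If $r\ge 2k$ then the $s=0$ term instead contributes $m^{2k}\sum_{d=0}^{r-2k}m^d$, each term with $1\le s\le k-1$ contributes $(m-1)m^{2k-1}\sum_{d=0}^{r-2k+s}m^d$, and the $s=k$ term is unchanged.

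In both cases I would evaluate the inner sums with $\sum_{i=a}^{b}m^i=\frac{m^{b+1}-m^a}{m-1}$, reindex the sum over $s$ by $i=r-2k+s+1$ so that the geometric pieces recombine exactly as in the proof of Proposition~\ref{oddpathsmary}, and collect terms. The arithmetic is routine, and in fact both cases reduce to the single identity
\[
\frac{m^{r-k+1}-m}{m-1}+\frac{1}{2}\left(m^{r-k+1}-1\right)=\frac{1}{2}(m+1)\cdot\frac{m^{r-k+1}-1}{m-1}-1,
\]
which follows at once on clearing denominators. I anticipate no real obstacle; the only thing to watch is the bookkeeping of which depths are admissible for each $s$, and in particular the degenerate sub-case $k=r$ of the first case, where the double sum over $s$ is empty and only the type-$[k,k]$ paths survive.
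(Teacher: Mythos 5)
Your proposal is correct and follows essentially the same route as the paper's own proof: classify paths by the depth of the shallowest vertex, apply Lemma~\ref{pathcountbytypemary} to the subtree below it, and sum the resulting geometric series; your two displayed double sums agree term-for-term with the paper's, and the identity you isolate (which checks out on clearing denominators) is exactly the simplification the paper carries out in both cases. The only cosmetic difference is that the paper also records the vacuous case $r<k$, which is excluded by the hypothesis $k\le r$ anyway.
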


\begin{proof}
Let $T$ be the perfect rooted $m$-ary tree of depth $r$. If $r<k$, then the longest path in $T$ has length $2r<t$, and so $P(d,t)=0$.

If $k\le r<2k$, then by Lemma~\ref{pathcountbytypemary}, there are $\frac{1}{2}(m-1)m^{t-1}$ type-$[k,k]$ paths rooted at each vertex $v$ for which $d_T(\rho,v)\le r-k$. Furthermore, for $2k-r\le s\le k-1$, there are $(m-1)m^{t-1}$ type-$[s,t-s]$ paths rooted at each vertex $v$ for which $d_T(\rho,v)\le r-t+s$. Therefore,
\begin{align*}
P_m(r,t)&=\frac{1}{2}(m-1)m^{2k-1}\sum_{d=0}^{r-k}m^d+(m-1)m^{2k-1}\sum_{s=2k-r}^{k-1}\left(\sum_{d=0}^{r-2k+s}m^d\right)\\
&=m^{2k-1}\left(\frac{1}{2}(m-1)\left(\frac{m^{r-k+1}-1}{m-1}\right)+(m-1)\sum_{s=2k-r}^{k-1}\frac{m^{r-2k+s+1}-1}{m-1}\right)\\
&=m^{2k-1}\left(\frac{1}{2}\left(m^{r-k+1}-1\right)+\sum_{i=1}^{r-k}\left(m^i-1\right)\right)\\
&=m^{2k-1}\left(\frac{1}{2}\left(m^{r-k+1}-1\right)+\frac{m^{r-k+1}-m}{m-1}-(r-k)\right)\\
&=m^{2k-1}\left(\frac{1}{2}(m+1)\left(\frac{m^{r-k+1}-1}{m-1}\right)-(r-k+1)\right).
\end{align*}

If $r\ge 2k$, then by Lemma~\ref{pathcountbytypemary}, there are $\frac{1}{2}(m-1)m^{t-1}$ type-$[k,k]$ paths rooted at each vertex $v$ for which $d_T(\rho,v)\le r-k$ and $m^{t}$ type-$[0,t]$ paths rooted at each vertex $v$ for which $d_T(\rho,v)\le r-2k$. Furthermore, for $1\le s\le k-1$, there are $(m-1)m^{t-1}$ type-$[s,t-s]$ paths rooted at each vertex $v$ for which $d_T(\rho,v)\le r-2k+s$. Therefore,
\begin{align*}
P_m(r,t)&=\frac{1}{2}(m-1)m^{2k-1}\sum_{d=0}^{r-k}m^d+(m-1)m^{2k-1}\sum_{s=1}^{k-1}\left(\sum_{d=0}^{r-2k+s}m^d\right)+m^{2k}\sum_{d=0}^{r-2k}m^d\\
&=m^{2k-1}\left(\frac{1}{2}(m-1)\left(\frac{m^{r-k+1}-1}{m-1}\right)+(m-1)\sum_{s=1}^{k-1}\frac{m^{r-2k+s+1}-1}{m-1}+m\left(\frac{m^{r-2k+1}-1}{m-1}\right)\right)\\
&=m^{2k-1}\left(\frac{1}{2}\left(m^{r-k+1}-1\right)+\sum_{i=r-2k+2}^{r-k}\left(m^{i}-1\right)+\frac{m^{r-2k+2}-m}{m-1}\right)\\
&=m^{2k-1}\left(\frac{1}{2}\left(m^{r-k+1}-1\right)+\frac{m^{r-k+1}-m^{r-2k+2}}{m-1}-(k-1)+\frac{m^{r-2k+2}-m}{m-1}\right)\\
&=m^{2k-1}\left(\frac{1}{2}(m+1)\left(\frac{m^{r-k+1}-1}{m-1}\right)-k\right).
\end{align*}
\end{proof}

Theorem~\ref{rootedmary} now follows by combining Propositions~\ref{oddpathsmary} and~\ref{evenpathsmary} with the observation that
\begin{align*}
V_R(d)&=\frac{m^{d+1}-1}{m-1}.
\end{align*}

\section{Perfect unrooted $m$-ary trees}\label{unrooted}

In an unrooted $m$-ary tree $T=(V,E)$, every vertex has degree $1$ or $m+1$. The diameter $D$ of $T$ is the maximum value of $d_T(u,v)$ over all pairs of vertices $u,v\in V$. We call $T$ perfect if and only if every degree-$1$ vertex is distance $D$ from some other vertex.

The symmetry of a perfect unrooted $m$-ary tree $T$ of diameter $D$ depends on whether $D$ is odd or even. We introduce some notation to be used in this respect. If $D=2r-1$ is odd, then $T$ can be constructed by connecting the roots $\rho_1,\rho_2$ of two perfect rooted $m$-ary trees $T_1,T_2$ of depth $r-1$ with an edge $e=\{\rho_1,\rho_2\}$. If $D=2r$ is even, then $T$ can be constructed by connecting the roots $\rho_1,\rho_2$ of the perfect rooted $m$-ary tree $T_1$ of depth $r$ and the perfect rooted $m$-ary tree $T_2$ of depth $r-1$ with an edge $e=\{\rho_1,\rho_2\}$. In either case, a path in $T$ is either contained in $T_1$, contained in $T_2$, or contains $e$.

Let $U_m(D,t)$ denote the number of paths of length $t$ in the perfect unrooted $m$-ary tree of diameter $D$. We consider four cases, depending on the parities of $t$ and $D$. The proofs of the propositions below make repeated use of Lemma~\ref{pathcountbytypemary} and Propositions~\ref{oddpathsmary} and ~\ref{evenpathsmary}.

\begin{proposition}\label{oddDoddpmary}
The number of paths of length $t=2k-1$ in the perfect unrooted $m$-ary tree of diameter $D=2r-1$, where $1\le k\le r$ is
\begin{align*}
U_m(D,t)&=\frac{m^{2k-2}}{m-1}\left(2m^{r-k+1}-(m+1)\right).
\end{align*}
\end{proposition}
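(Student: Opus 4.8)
The plan is to use the decomposition of $T$ recorded just before the statement. Since $D=2r-1$ is odd, $T$ is obtained by joining the roots $\rho_1,\rho_2$ of two copies $T_1,T_2$ of the perfect rooted $m$-ary tree of depth $r-1$ by an edge $e=\{\rho_1,\rho_2\}$, and every length-$t$ path of $T$ lies in $T_1$, lies in $T_2$, or contains $e$. Writing $N_e$ for the number of length-$t$ paths of $T$ through $e$ and using the symmetry of the construction,
\[
U_m(D,t)=2P_m(r-1,t)+N_e,
\]
so it remains to evaluate $N_e$ and then invoke Proposition~\ref{oddpathsmary}.

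The second step is to count $N_e$ directly. Splitting a length-$t$ path through $e$ at the edge $e$ expresses it uniquely as the union of a type-$[0,a]$ path rooted at $\rho_1$ in $T_1$, the edge $e$, and a type-$[0,b]$ path rooted at $\rho_2$ in $T_2$, where $a,b\ge 0$, $a+b=t-1=2k-2$, and $a,b\le r-1$. By the $s=0$ case of Lemma~\ref{pathcountbytypemary} there are $m^a$ choices for the first and $m^b$ for the second, so each admissible pair $(a,b)$ contributes $m^{a+b}=m^{2k-2}$. The admissible values of $a$ form the interval $0\le a\le 2k-2$ when $t\le r$, giving $2k-1$ of them, and the interval $2k-1-r\le a\le r-1$ when $t>r$, giving $2r-2k+1$ of them. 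Hence $N_e=(2k-1)m^{2k-2}$ if $t\le r$, and $N_e=(2r-2k+1)m^{2k-2}$ if $t>r$.

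Finally I would substitute the value of $P_m(r-1,t)$ from Proposition~\ref{oddpathsmary}, which, with depth $r-1$ in place of $r$, has its first branch for $t\ge r$ and its second for $t\le r-1$; the case $k=r$ lies outside that proposition but is immediate, since then $P_m(r-1,2r-1)=0$ and $U_m(D,t)=N_e=m^{2r-2}$, in agreement with the claimed formula. In the remaining cases a short computation — in which the integer subtracted inside $2P_m(r-1,t)$ cancels against the coefficient of $m^{2k-2}$ contributed by $N_e$, leaving $-1$ — gives
\[
U_m(D,t)=m^{2k-2}\left(\frac{2\left(m^{r-k+1}-1\right)}{m-1}-1\right)=\frac{m^{2k-2}}{m-1}\left(2m^{r-k+1}-(m+1)\right).
\]
The delicate point is purely organizational: the case split used for $N_e$ does not coincide with the one in Proposition~\ref{oddpathsmary} — they disagree at $t=r$ — so one must verify the boundary $t=r$ (and the degenerate $k=r$) separately, after which the several a priori distinct cases collapse to the single closed form in the statement.
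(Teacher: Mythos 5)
Your proposal is correct and follows essentially the same route as the paper: it uses the same decomposition of $T$ into $T_1$, $T_2$, and $e$, counts the paths through $e$ by splitting them at $e$ into two descending paths of lengths $a+b=t-1$ (which is exactly the paper's sum $\sum_{s=\max\{0,t-r\}}^{\min\{t-1,r-1\}}m^{t-1}$), and then substitutes Proposition~\ref{oddpathsmary} for $2P_m(r-1,t)$, with the degenerate case $k=r$ handled directly. The only difference is organizational --- the paper runs through the boundary cases $r=k$, $k<r<2k-1$, $r=2k-1$, $r>2k-1$ explicitly, while you isolate the same boundaries ($k=r$ and $t=r$) before collapsing everything to the single closed form --- and your arithmetic in each regime checks out.
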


\begin{proof}
Let $T$ be the perfect unrooted $m$-ary tree of diameter $D=2r-1$. We use the decomposition of $T$ into $T_1$, $T_2$, and $e$. The number of paths in $T$ of length $t=2k-1$ that contain $e$ is
\begin{align*}
\sum_{s=\max\{0,t-r\}}^{\min\{t-1,r-1\}}m^{t-1}=
\begin{cases}
0,&r<k,\\
(2r-2k+1)m^{2k-2},&k\le r<2k-1\\
(2k-1)m^{2k-2},&r\ge 2k-1.
\end{cases}
\end{align*}

If $r<k$, then $U_m(D,t)=0$. If $r=k$, then the depth of $T_1$ (and of $T_2$) is $r-1<k$, so $P_m(T_1,t)=P_m(T_2,t)=0$. Also, $2r-2k+1=1$, and hence $U_m(D,t)=m^{2k-2}$. If $k<r<2k-1$, then $k\le r-1<2k-2$, and so
\begin{align*}
U_m(D,t)&=2P_m(r-1,2k-1)+(2r-2k+1)m^{2k-2}\\
&=2m^{2k-2}\left(\frac{m^{r-k+1}-1}{m-1}-(r-k+1)\right)+(2r-2k+1)m^{2k-2}\\
&=\frac{m^{2k-2}}{m-1}\left(2m^{r-k+1}-(m+1)\right).
\end{align*}
If $r=2k-1$, then $r-1<2k-1$, and so
\begin{align*}
U_m(D,t)&=2P_m(r-1,2k-1)+(2k-1)m^{2k-2}\\
&=2m^{2k-2}\left(\frac{m^{r-k+1}-1}{m-1}-(r-k+1)\right)+(2k-1)m^{2k-2}\\
&=\frac{m^{2k-2}}{m-1}\left(2m^{r-k+1}-(m+1)\right).
\end{align*}
If $r>2k-1$, then $r-1\ge 2k-1$, and so
\begin{align*}
U_m(D,t)&=2P_m(r-1,2k-1)+(2k-1)m^{2k-2}\\
&=2m^{2k-2}\left(\frac{m^{r-k+1}-1}{m-1}-k\right)+(2k-1)m^{2k-2}\\
&=\frac{m^{2k-2}}{m-1}\left(2m^{r-k+1}-(m+1)\right).
\end{align*}
\end{proof}

\begin{proposition}\label{oddDevenpmary}
The number of paths of length $t=2k$ in the perfect unrooted $m$-ary tree of diameter $D=2r-1$, where $1\le k<r$, is
\begin{align*}
U_m(D,t)&=\frac{m^{2k-1}}{m-1}\left((m+1)m^{r-k}-(m+1)\right).
\end{align*}
\end{proposition}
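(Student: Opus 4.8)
The plan is to follow the same strategy as in the proof of Proposition~\ref{oddDoddpmary}. Since $D=2r-1$ is odd, the tree $T$ decomposes into two copies $T_1,T_2$ of the perfect rooted $m$-ary tree of depth $r-1$, joined by the edge $e=\{\rho_1,\rho_2\}$, and every path of length $t=2k$ in $T$ is contained in $T_1$, contained in $T_2$, or contains $e$. The hypothesis $1\le k<r$ gives $1\le k\le r-1$, so Proposition~\ref{evenpathsmary} applies to each of $T_1$ and $T_2$, each contributing $P_m(r-1,2k)$ paths of length $2k$. Thus $U_m(D,t)=2P_m(r-1,2k)+N_e$, where $N_e$ denotes the number of length-$2k$ paths in $T$ that contain $e$.

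To compute $N_e$, I would observe that a path of length $2k$ through $e$ is determined by a descending path of length $s$ from $\rho_1$ in $T_1$ together with a descending path of length $2k-1-s$ from $\rho_2$ in $T_2$; such paths exist inside trees of depth $r-1$ exactly when $\max\{0,2k-r\}\le s\le\min\{2k-1,r-1\}$, and for each admissible $s$ there are $m^{s}\cdot m^{2k-1-s}=m^{2k-1}$ of them. Hence
\begin{align*}
N_e=\sum_{s=\max\{0,2k-r\}}^{\min\{2k-1,r-1\}}m^{2k-1}=
\begin{cases}
2k\,m^{2k-1},&2k\le r,\\
(2r-2k)\,m^{2k-1},&2k>r,
\end{cases}
\end{align*}
where $k<r$ guarantees the index set is nonempty in both cases (and the two descriptions agree at the boundary $r=2k$).

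It then remains to substitute the formula of Proposition~\ref{evenpathsmary} for $P_m(r-1,2k)$, whose two branches are distinguished by whether $r-1<2k$ or $r-1\ge 2k$, noting that its geometric-sum factor becomes $\tfrac{m^{(r-1)-k+1}-1}{m-1}=\tfrac{m^{r-k}-1}{m-1}$ and its linear correction becomes either $-k$ (when $r-1\ge 2k$) or $-\bigl((r-1)-k+1\bigr)=-(r-k)$ (when $r-1<2k$). Adding $N_e$, the linear correction cancels exactly: $-2k\,m^{2k-1}$ against $2k\,m^{2k-1}$ when $r\ge 2k+1$, and $-2(r-k)m^{2k-1}$ against $(2r-2k)m^{2k-1}$ when $r\le 2k$ (the case $r=2k$ works under either matching). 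What survives is
\begin{align*}
U_m(D,t)=m^{2k-1}(m+1)\left(\frac{m^{r-k}-1}{m-1}\right)=\frac{m^{2k-1}}{m-1}\left((m+1)m^{r-k}-(m+1)\right),
\end{align*}
as claimed.

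The only genuine work is the bookkeeping of this case split: the branch of Proposition~\ref{evenpathsmary} governing $P_m(r-1,2k)$ depends on $r-1$ versus $2k$, whereas the value of $N_e$ depends on $r$ versus $2k$, so one must check that the constant corrections line up across all regimes, in particular at $r=2k$ and at the extreme $r=k+1$ (where $T_1,T_2$ have depth exactly $k$ and Proposition~\ref{evenpathsmary} is applied at the smallest admissible depth). Once the three contributions are assembled, everything else is routine arithmetic with the geometric-series identity already invoked in Section~\ref{rooted}.
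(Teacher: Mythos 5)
Your proposal is correct and follows essentially the same route as the paper: decompose $T$ into two depth-$(r-1)$ rooted trees joined by $e$, count the $m^{2k-1}$ paths through $e$ for each admissible split $s$, and add $2P_m(r-1,2k)$ from Proposition~\ref{evenpathsmary}, checking that the linear corrections cancel in each regime. The case analysis ($r\le 2k$ versus $r>2k$) and the final cancellation match the paper's computation exactly.
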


\begin{proof}
Using the decomposition of $T$ into $T_1$, $T_2$, and $e$, the number of paths of  length $t=2k$ in $T$ that contain $e$ is
\begin{align*}
\sum_{s=\max\{0,t-r\}}^{\min\{t-1,r-1\}}m^{t-1}=
\begin{cases}
0,&r\le k,\\
(2r-2k)m^{2k-1},&k<r\le 2k\\
2k m^{2k-1},&r>2k,
\end{cases}
\end{align*}
and again the result is immediate for $r\le k$. If $k<r\le 2k$, then $k\le r-1<2k$, and so
\begin{align*}
U_m(D,t)&=2P_m(r-1,2k)+(2r-2k)m^{2k-1}\\
&=2m^{2k-1}\left(\frac{1}{2}(m+1)\left(\frac{m^{r-k}-1}{m-1}\right)-(r-k)\right)+(2r-2k)m^{2k-1}\\
&=\frac{m^{2k-1}}{m-1}\left((m+1)m^{r-k}-(m+1)\right).
\end{align*}
If $r>2k$, then $r-1\ge 2k-1$, and so
\begin{align*}
U_m(D,t)&=2P_m(r-1,2k)+2km^{2k-1}\\
&=2m^{2k-1}\left(\frac{1}{2}(m+1)\left(\frac{m^{r-k}-1}{m-1}\right)-k\right)+2km^{2k-1}\\
&=\frac{m^{2k-1}}{m-1}\left((m+1)m^{r-k}-(m+1)\right).
\end{align*}
\end{proof}

\begin{proposition}\label{evenDoddpmary}
The number of paths of length $t=2k-1$ in the perfect unrooted $m$-ary tree of diameter $D=2r$, where $1\le k\le r$, is
\begin{align*}
U_m(D,t)&=\frac{m^{2k-2}}{m-1}\left((m+1)m^{r-k+1}-(m+1)\right).
\end{align*}
\end{proposition}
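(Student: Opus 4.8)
\emph{Proof proposal.} The plan is to mirror the arguments in Propositions~\ref{oddDoddpmary} and~\ref{oddDevenpmary}. Use the decomposition of $T$ (diameter $D=2r$) into the perfect rooted $m$-ary tree $T_1$ of depth $r$, the perfect rooted $m$-ary tree $T_2$ of depth $r-1$, and the connecting edge $e=\{\rho_1,\rho_2\}$. Since every path of length $t=2k-1$ in $T$ lies wholly in $T_1$, lies wholly in $T_2$, or contains $e$, we have $U_m(D,t)=P_m(r,t)+P_m(r-1,t)+N$, where $N$ is the number of length-$t$ paths through $e$. The only genuine novelty relative to the earlier cases is that $T_1$ is now one level deeper than $T_2$; this affects the range over which $N$ is summed, but (as the statement promises) not the final closed form.

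First I would compute $N$. A path of length $t$ through $e$ splits at $e$ into a downward path of length $s\ge 0$ from $\rho_1$ in $T_1$ and a downward path of length $t-1-s\ge 0$ from $\rho_2$ in $T_2$, contributing $m^s$ and $m^{t-1-s}$ choices respectively, hence $m^{t-1}=m^{2k-2}$ paths per admissible $s$. Admissibility forces $0\le s\le r$ and $0\le t-1-s\le r-1$, i.e. $\max\{0,t-r\}\le s\le\min\{t-1,r\}$ (contrast $\min\{t-1,r-1\}$ in the odd-diameter case). Since $k\le r$, this yields $N=(2k-1)m^{2k-2}$ when $r\ge 2k-1$ and $N=(2r-2k+2)m^{2k-2}$ when $k\le r\le 2k-2$, the latter range being nonempty only for $k\ge 2$.

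Next I would insert the values of $P_m(r,2k-1)$ and $P_m(r-1,2k-1)$ from Proposition~\ref{oddpathsmary}, treating the same two cases. A little care is needed at the boundaries: when $r=k$ the tree $T_2$ has depth $k-1<k$, so $P_m(k-1,2k-1)=0$ directly, but the ``$r<2k-1$'' branch of Proposition~\ref{oddpathsmary} also evaluates to $0$ there, so the branch formula may be used uniformly for $k\le r\le 2k-2$; and when $r=2k-1$ the depth-$(2k-2)$ term equals $m^{2k-2}\bigl(\tfrac{m^{k}-1}{m-1}-k\bigr)$ whether read off the ``small'' or ``large'' branch, so the ``large'' formula may be used uniformly for $r\ge 2k-1$. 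With these conventions, the constant parts contribute $-k-k+(2k-1)=-1$ in the case $r\ge 2k-1$, and (writing $j=r-k$) $-(j+2)-(j+1)+(2j+2)=-1$ in the case $k\le r\le 2k-2$; so in both cases
\begin{align*}
U_m(D,t)=m^{2k-2}\left(\frac{m^{r-k+2}-1}{m-1}+\frac{m^{r-k+1}-1}{m-1}-1\right).
\end{align*}
Putting this over the common denominator and using $m^{r-k+2}+m^{r-k+1}-m-1=(m+1)(m^{r-k+1}-1)$ gives the stated formula.

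The main obstacle is purely bookkeeping: getting the summation range for $N$ right (the shift from $r-1$ to $r$ in the upper limit) and verifying that the two cases collapse to one closed form with no residual split. The payoff worth highlighting in the write-up is exactly that collapse — unlike $P_m(r,t)$, the unrooted count $U_m(D,t)$ carries no dependence on whether $t\le r$, because the ``path through $e$'' term precisely absorbs it.
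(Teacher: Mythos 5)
Your proposal is correct and follows essentially the same route as the paper: decompose $T$ into $T_1$ of depth $r$, $T_2$ of depth $r-1$, and the edge $e$, count the $m^{t-1}$ paths through $e$ for each admissible split $s$, and then substitute the appropriate branches of Proposition~\ref{oddpathsmary}. The only difference is organizational — you collapse the paper's four subcases ($r=k$, $k<r<2k-1$, $r=2k-1$, $r>2k-1$) into two by observing that the branch formulas agree at the boundaries, and your arithmetic (the constants summing to $-1$ and the factorization $(m+1)(m^{r-k+1}-1)$) checks out.
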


\begin{proof}
Using the decomposition of $T$ into $T_1$ (depth $r$), $T_2$ (depth $r-1$), and $e$, the number of paths of length $t$ paths in $T$ that contain $e$ is
\begin{align*}
\sum_{s=\max\{0,t-r\}}^{\min\{t-1,r\}}m^{t-1}=
\begin{cases}
0,&r<k,\\
(2r-2k+2)m^{2k-2},&k\le r<2k-1\\
(2k-1)m^{2k-2},&r\ge 2k-1.
\end{cases}
\end{align*}
If $r<k$, then $U_m(D,t)=0$. If $r=k$, then $r-1<k$, and so
\begin{align*}
U_m(D,t)&=P_m(r,2k-1)+P_m(r-1,2k-1)+(2r-2k+2)m^{2k-2}\\
&=m^{2k-2}\left(\frac{m^{r-k+2}-1}{m-1}-(r-k+2)\right)+(2r-2k+2)m^{2k-2}\\
&=\frac{m^{2k-2}}{m-1}\left((m+1)m^{r-k+1}-(m+1)\right).
\end{align*}
If $k<r<2k-1$, then $k\le r-1<2k-2$, and so
\begin{align*}
U_m(D,t)&=P_m(r,2k-1)+P_m(r-1,2k-1)+(2r-2k+2)m^{2k-2}\\
&=m^{2k-2}\left(\frac{m^{r-k+2}-1}{m-1}-(r-k+2)\right)+m^{2k-2}\left(\frac{m^{r-k+1}-1}{m-1}-(r-k+1)\right)\\
&\qquad+(2r-2k+2)m^{2k-2}\\
&=\frac{m^{2k-2}}{m-1}\left((m+1)m^{r-k+1}-(m+1)\right).
\end{align*}
If $r=2k-1$, then $r-1<2k-1$, and so
\begin{align*}
U_m(D,t)&=P_m(r,2k-1)+P_m(r-1,2k-1)+(2k-1)m^{2k-2}\\
&=m^{2k-2}\left(\frac{m^{r-k+2}-1}{m-1}-k\right)+m^{2k-2}\left(\frac{m^{r-k+1}-1}{m-1}-(r-k+1)\right)+(2k-1)m^{2k-2}\\
&=\frac{m^{2k-2}}{m-1}\left((m+1)m^{r-k+1}-(m+1)\right).
\end{align*}
If $r>2k-1$, then $r-1\ge 2k-1$, and so
\begin{align*}
U_m(D,t)&=P_m(r,2k-1)+P_m(r-1,2k-1)+(2k-1)m^{2k-2}\\
&=m^{2k-2}\left(\frac{m^{r-k+2}-1}{m-1}-k\right)+m^{2k-2}\left(\frac{m^{r-k+1}-1}{m-1}-k\right)+(2k-1)m^{2k-2}\\
&=\frac{m^{2k-2}}{m-1}\left((m+1)m^{r-k+1}-(m+1)\right).
\end{align*}
\end{proof}

\begin{proposition}\label{evenDevenpmary}
The number of paths of length $t=2k$ in the perfect unrooted $m$-ary tree of diameter $D=2r$, where $1\le k\le r$, is
\begin{align*}
U_m(D,t)&=
\frac{m^{2k-1}}{m-1}\left(\frac{1}{2}(m+1)^2m^{r-k}-(m+1)\right).
\end{align*}
\end{proposition}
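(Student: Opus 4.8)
The plan is to reuse the decomposition from Section~\ref{unrooted}: write the perfect unrooted $m$-ary tree $T$ of diameter $D=2r$ as the union of the perfect rooted $m$-ary tree $T_1$ of depth $r$, the perfect rooted $m$-ary tree $T_2$ of depth $r-1$, and the edge $e=\{\rho_1,\rho_2\}$ joining their roots. Since every path of length $t=2k$ in $T$ is contained in $T_1$, contained in $T_2$, or contains $e$, we have $U_m(D,t)=P_m(r,2k)+P_m(r-1,2k)+N_e$, where $N_e$ denotes the number of length-$t$ paths that use $e$. The first two summands are supplied by Proposition~\ref{evenpathsmary}, so the remaining work is to evaluate $N_e$ and then add everything up, exactly as in Propositions~\ref{oddDoddpmary}--\ref{evenDoddpmary}.

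To compute $N_e$: a length-$t$ path through $e$ is the concatenation of a descending path of length $s$ from $\rho_1$ into $T_1$ with a descending path of length $t-1-s$ from $\rho_2$ into $T_2$; these fit precisely when $\max\{0,t-r\}\le s\le\min\{t-1,r\}$, and for each admissible $s$ there are $m^s$ choices for the first and $m^{t-1-s}$ for the second (the $s=0$ case of Lemma~\ref{pathcountbytypemary} applied in each tree), hence $m^{t-1}$ paths. Summing $m^{t-1}$ over the range of $s$ gives $0$ when $r<k$, gives $(2r-2k+1)m^{2k-1}$ when $k\le r<2k$, and gives $2k\,m^{2k-1}$ when $r\ge 2k$.

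Next I would substitute the values of $P_m(r,2k)$ and $P_m(r-1,2k)$ from Proposition~\ref{evenpathsmary}, choosing in each case the branch appropriate to whether the depth involved is less than or at least $2k$; this splits into the subcases $r=k$, $k<r<2k$, $r=2k$, and $r>2k$. In each of the last three the terms linear in $r$ cancel completely: one gets $-(r-k+1)-(r-k)+(2r-2k+1)=0$ when $k<r<2k$ and $-k-k+2k=0$ when $r\ge 2k$ (at $r=2k$ the tree $T_2$ of depth $2k-1$ still uses the first branch, but its constant term is again $k$). What survives are two fractions of $m$-powers, and adding them and using $m^{r-k+1}+m^{r-k}=(m+1)m^{r-k}$ collapses them to $\frac{1}{2}(m+1)\cdot\frac{(m+1)m^{r-k}-2}{m-1}$; multiplying by $m^{2k-1}$ gives the stated formula. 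In the subcase $r=k$, Proposition~\ref{evenpathsmary} does not apply to $T_2$, but $T_2$ has depth $r-1=k-1$ and so contains no path of length $2k$, so $P_m(r-1,2k)=0$; combining $P_m(k,2k)$ with $N_e=m^{2k-1}$ then gives $\frac{m^{2k-1}}{m-1}\left(\frac{1}{2}(m+1)^2-(m+1)\right)$, which is the $r=k$ instance of the claim.

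The only real obstacle is bookkeeping: keeping the threshold $2k$ consistent across $N_e$ and both occurrences of Proposition~\ref{evenpathsmary}, and separating out the degenerate case $r=k$ in which $T_2$ is too shallow to contribute. No idea beyond the pattern of the preceding propositions should be required.
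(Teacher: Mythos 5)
Your proposal is correct and follows essentially the same route as the paper: the same decomposition into $T_1$ (depth $r$), $T_2$ (depth $r-1$), and the bridging edge $e$, the same count $\sum_{s=\max\{0,t-r\}}^{\min\{t-1,r\}}m^{t-1}$ for the paths through $e$, and the same case split $r=k$, $k<r<2k$, $r=2k$, $r>2k$ when substituting Proposition~\ref{evenpathsmary}. The cancellations and the treatment of the degenerate case $r=k$ match the paper's proof exactly.
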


\begin{proof}
Using the decomposition of $T$ into $T_1$ (depth $r$), $T_2$ (depth $r-1$), and $e$, the number of paths of length $t$ in $T$ that contain $e$ is
\begin{align*}
\sum_{s=\max\{0,t-r\}}^{\min\{t-1,r\}}m^{t-1}=
\begin{cases}
0,&r<k,\\
(2r-2k+1)m^{2k-1},&k\le r<2k,\\
2km^{2k-1},&r\ge 2k,
\end{cases}
\end{align*}
and again the result is immediate for $r<k$. If $r=k$, then $r-1<k$, and so
\begin{align*}
U_m(D,t)&=P_m(r,2k)+P_m(r-1,2k)+(2r-2k+1)m^{2k-1}\\
&=m^{2k-1}\left(\frac{1}{2}(m+1)\left(\frac{m^{r-k+1}-1}{m-1}\right)-(r-k+1)\right)+(2r-2k+1)m^{2k-1}\\
&=\frac{m^{2k-1}}{m-1}\left(\frac{1}{2}(m+1)^2m^{r-k}-(m+1)\right).
\end{align*}
If $k<r<2k$, then $k\le r-1<2k$, and so
\begin{align*}
U_m(D,t)&=P_m(r,2k)+P_m(r-1,2k)+(2r-2k+1)m^{2k-1}\\
&=m^{2k-1}\left(\frac{1}{2}(m+1)\left(\frac{m^{r-k+1}-1}{m-1}\right)-(r-k+1)\right)\\
&\qquad+m^{2k-1}\left(\frac{1}{2}(m+1)\left(\frac{m^{r-k}-1}{m-1}\right)-(r-k)\right)+(2r-2k+1)m^{2k-1}\\
&=\frac{m^{2k-1}}{m-1}\left(\frac{1}{2}(m+1)^2m^{r-k}-(m+1)\right).
\end{align*}
If $r=2k$, then $r-1<2k$, and so
\begin{align*}
U_m(D,t)&=P_m(r,2k)+P_m(r-1,2k)+2km^{2k-1}\\
&=m^{2k-1}\left(\frac{1}{2}(m+1)\left(\frac{m^{r-k+1}-1}{m-1}\right)-k\right)\\
&\qquad+m^{2k-1}\left(\frac{1}{2}(m+1)\left(\frac{m^{r-k}-1}{m-1}\right)-(r-k)\right)+2km^{2k-1}\\
&=\frac{m^{2k-1}}{m-1}\left(\frac{1}{2}(m+1)^2m^{r-k}-(m+1)\right).
\end{align*}
If $r>2k$, then $r-1\ge 2k$, and so
\begin{align*}
U_m(D,t)&=P_m(r,2k)+P_m(r-1,2k)+2km^{2k-1}\\
&=m^{2k-1}\left(\frac{1}{2}(m+1)\left(\frac{m^{r-k+1}-1}{m-1}\right)-k\right)\\
&\qquad+m^{2k-1}\left(\frac{1}{2}(m+1)\left(\frac{m^{r-k}-1}{m-1}\right)-k\right)+2km^{2k-1}\\
&=\frac{m^{2k-1}}{m-1}\left(\frac{1}{2}(m+1)^2m^{r-k}-(m+1)\right).
\end{align*}
\end{proof}

Theorem~\ref{unrootedmary} now follows by combining Propositions~\ref{oddDoddpmary} to~\ref{evenDevenpmary} with the observation that
\begin{align*}
V(d)=
\begin{cases}
\frac{2m^{\frac{d+1}{2}}-2}{m-1},&d\ {\rm odd},\\
\frac{(m+1)m^{\frac{d}{2}}-2}{m-1},&d\ {\rm even}.
\end{cases}
\end{align*}

\end{document}